\documentclass[11pt]{amsart}

\usepackage[margin=1in]{geometry}

\usepackage{amsmath,amssymb,amsthm,mathtools}

\usepackage{microtype}

\usepackage[colorlinks=true,linkcolor=blue,citecolor=blue,urlcolor=blue]{hyperref}

\usepackage[capitalise,nameinlink,noabbrev]{cleveref}

\newtheorem{theorem}{Theorem}[section]

\newtheorem{lemma}[theorem]{Lemma}

\newtheorem{proposition}[theorem]{Proposition}

\newtheorem{conjecture}[theorem]{Conjecture}

\theoremstyle{remark}

\newtheorem*{remark}{Remark}

\newcommand{\E}{\mathbb E}

\newcommand{\Prob}{\mathbb P}

\newcommand{\cP}{\mathcal P}

\DeclareMathOperator{\avdeg}{\overline d}

\title{Nearly spanning regular subgraphs}

\author{Varun Sivashankar}
\address{Department of Mathematics, Princeton University}
\email{varunsiva@princeton.edu}

\date{}

\begin{document}

\begin{abstract}

Alon and Mubayi asked whether, for every integer $k\ge1$ and every $\varepsilon>0$, there exists $r_0=r_0(k,\varepsilon)$ such that every $r$-regular graph on $n$ vertices with $r\ge r_0$ contains a $k$-regular subgraph covering at least $(1-\varepsilon)n$ vertices. Previously, the conjecture was known for $k\in\{1,2\}$ and for $k$ and $r$ both even. We answer this question affirmatively for general $k$ and $r$ with $\varepsilon=O_k(r^{-1/2})$. For $k=2$ and every odd $r\ge3$, we show that $\varepsilon=1/(r^2-3)$ suffices which is best possible.

\end{abstract}

\maketitle
\section{Introduction}

Alon and Mubayi asked whether, for every integer $k\ge1$ and every $\varepsilon>0$, there exists $r_0=r_0(k,\varepsilon)$ such that every $r$-regular graph on $n$ vertices with $r\ge r_0$ contains a $k$-regular subgraph covering at least $(1-\varepsilon)n$ vertices. The question appears as Conjecture~3.1 in Alon's 2003 survey~\cite{Alon2003}, where it is attributed to discussions with Mubayi. The subgraph need not be induced.

For $k=1$, Vizing's theorem~\cite{Vizing1964} partitions the edges into at most $r+1$ matchings, one of which covers at least $rn/(r+1)$ vertices. For $k=2$, Alon~\cite[Theorem~3.2]{Alon2003} found a $2$-regular subgraph omitting $O(n\log r/(r\log\log r))$ vertices. Choi, Kim, Kostochka, Park and West~\cite{CKKPW2019} proved that every cubic graph has a $2$-regular subgraph omitting at most a proportion $1/6$ of its vertices, and that this proportion is best possible. They also asked for the sharp bound for larger odd degrees. A \emph{$2$-factor} is a spanning $2$-regular subgraph. When $r$ and $k$ are both even and $k\le r$, Petersen's $2$-factorization theorem~\cite{Petersen1891} gives a spanning $k$-regular subgraph.

We answer the Alon--Mubayi question affirmatively for every $k$ and determine the sharp answer for $k=2$. To state our results, let $f_k(G)$ be the largest number of vertices in a $k$-regular subgraph of $G$, and define $\delta_k(r)=\sup_G\bigl(1-f_k(G)/|V(G)|\bigr)$, where the supremum is over all $r$-regular graphs $G$. Thus $\delta_k(r)$ is the largest proportion of vertices that may have to be left uncovered.

\begin{itemize}
\item \Cref{thm:main} gives $\delta_k(r)\le100Ck/\sqrt r$ for all integers $1\le k\le r$, where $C\ge1$ is the absolute constant in \cref{lem:cjmm}. In particular, $\delta_k(r)\to0$ as $r\to\infty$ for every fixed $k$.
\item \Cref{thm:k2} gives $\delta_2(r)=1/(r^2-3)$ for every odd $r\ge3$.
\end{itemize}

The lower bounds depend on the parities of $k$ and $r$. Let $1\le k\le r$. For odd $k$ and even $r$, the graph $K_{r+1}$ gives $\delta_k(r)\ge1/(r+1)$, since a $k$-regular graph with $k$ odd has an even number of vertices. This is sharp for $k=1$ by Vizing's theorem. For odd $r$, the construction of O and West~\cite{OW2010}, discussed in \cref{subsec:sharp-construction}, gives $\delta_k(r)\ge1/(r^2-3)$ when $k$ is even and $\delta_k(r)\ge(r-k)/(r+1)^2$ when $k$ is odd. The latter bound is sharp for $k=1$ by their matching bound~\cite[Theorem~2.1]{OW2011}. These lower bounds suggest that, for fixed $k$, the correct rates are $r^{-2}$ for even $k$ and odd $r$, and $r^{-1}$ for odd $k$; see Conjectures~\ref{conj:even} and~\ref{conj:odd}.

The existence of a nonempty regular subgraph, without a bound on its order, has a longer history. Pyber, R\"odl and Szemer\'edi~\cite{PRS1995} proved that average degree $C_k\log\Delta$ forces a $k$-regular subgraph in a graph of maximum degree $\Delta$. Janzer and Sudakov~\cite{JS2023} improved this to $C_k\log\log\Delta$. We use a theorem of Chakraborti, Janzer, Methuku and Montgomery~\cite[Theorem~1.13]{CJMM2026}: if maximum degree is at most a fixed multiple of average degree $d$, then a $d'$-regular subgraph exists for every $d'$ up to a fixed multiple of $d$.

The two proofs in this paper use rather different ideas. For \cref{thm:main}, we start from a largest $k$-regular subgraph and enlarge it by simultaneous alternating-path exchanges. A short random walk produces many alternating paths, and a sampling argument selects paths with pairwise disjoint interiors. The expected-visit calculation is related to the alternating walks of Goel, Kapralov and Khanna~\cite{GKK2013}; see also Dani and Hayes~\cite{DH2025}. The final sampling and degree-trimming steps are close in spirit to arguments of Alon, Jiang, Miller and Pritikin~\cite{AJMP2003} and Janzer and Sudakov~\cite{JS2023}.

A \emph{bridge} is an edge whose deletion increases the number of connected components. For \cref{thm:k2}, we follow the decomposition at bridges used by Choi et al.~\cite{CKKPW2019}. We extend their local estimate to larger odd degrees using Edmonds' $b$-factor polytope~\cite{Edmonds1965}, then count vertices in the resulting bridge tree.

All graphs are finite and simple unless stated otherwise. We write $e(F)$, $\Delta(F)$ and $\avdeg(F)=2e(F)/|V(F)|$ for the number of edges, maximum degree and average degree of a nonempty graph $F$. The empty graph is allowed as a $k$-regular subgraph.
\section{A general nearly-spanning theorem}\label{sec:augmentation}

\begin{theorem}\label{thm:main}
There is an absolute constant $C\ge1$ such that, for all integers $1\le k\le r$, every $r$-regular graph on $n$ vertices contains a $k$-regular subgraph omitting at most $100Ckn/\sqrt r$ vertices. Equivalently, $\delta_k(r)\le100Ck/\sqrt r$.
\end{theorem}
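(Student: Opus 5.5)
We start from a $k$-regular subgraph $H$ of the $r$-regular graph $G$ with $|V(H)|$ maximum, and write $U=V(G)\setminus V(H)$ for the set of omitted vertices. Suppose, for contradiction, that $|U|>100Ckn/\sqrt r$. We will produce a larger $k$-regular subgraph by performing many alternating-path exchanges simultaneously, and then repairing the degrees with \cref{lem:cjmm}.

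\textbf{Alternating paths.} An alternating path starts at a vertex of $U$ with an edge of $G\setminus E(H)$ and then uses non-$H$ and $H$ edges in alternation. If such a path ends at a vertex of $U$ after an odd number of edges, swapping the two edge classes along it raises each endpoint's degree by one and leaves all interior degrees unchanged.

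\textbf{Building many paths.} We run a short random walk of length $\ell\approx\sqrt r$ from each $u\in U$. Non-$H$ steps choose uniformly among the $r-k$ non-$H$ edges and $H$ steps among the $k$ $H$-edges. Each non-$H$ step lands in $U$ with probability roughly $|U|$-weighted. More usefully, we count expected visits: since $G$ is regular, the expected number of visits of all the walks to any fixed vertex is $O(\ell\,|U|/n)$ up to factors of $k$. This is the Goel--Kapralov--Khanna type calculation. The point is that a walk hits $U$ again within $\ell$ steps with probability bounded below. There are $e_G(U,V(H))\ge |U|(r-k\cdot0)-2e(G[U])$ edges leaving $U$, and each vertex of $H$ has only $r-k$ non-$H$ edges. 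Double counting therefore shows the walk cannot avoid $U$ for long without concentrating visits, and the threshold $\sqrt r$ comes from balancing hitting probability $\sim \ell|U|/n$ against collision cost $\sim \ell^2 k/r$.

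\textbf{Selecting disjoint paths.} Next we keep each successful path independently with small probability $p$. By the visit bound, the expected number of interior collisions is at most $p^2\cdot O(\ell^2|U|k/r)$, which is small compared with $p|U|$ when $\ell\lesssim\sqrt{r/k}$. Deleting a path from each colliding pair leaves a family of paths with pairwise disjoint interiors. Swapping along all of them simultaneously yields a subgraph $H'$ in which every vertex of $V(H)$ still has degree $k$. Each vertex of $U$ that is used gets degree equal to the number of chosen paths ending there. We cannot make these degrees exactly $k$ directly. Instead, we aim for a graph $F$ on a set $W\supsetneq V(H)$-sized vertex set in which all degrees lie between $k$ and a constant multiple of $k$, with at least $|V(H)|+1$ vertices. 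For this we use $k$-fold path selection, so that new vertices receive about $k$ to $O(k)$ new endpoints. Then \cref{lem:cjmm}, applied to an auxiliary graph whose max degree is $O(\text{avg degree})$, extracts a $k$-regular subgraph.

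\textbf{Main obstacle.} The difficulty is the regularization step. \cref{lem:cjmm} gives a $k$-regular subgraph but not a spanning one, so we can lose vertices. To handle this, I would apply the trimming only locally: the union of $H$ together with the modified neighbourhoods forms a graph in which the defect vertices are few. I would then apply \cref{lem:cjmm} to the bipartite-like "correction graph" of new vertices and their swap partners, which has bounded max-to-average degree ratio, and glue the resulting $k$-regular piece back. Making this gluing preserve exact degree $k$ at the old vertices is what I expect to be hardest. I would try to ensure that the correction graph consists precisely of edges whose removal or addition is balanced, in the style of the Janzer--Sudakov trimming, so that the gain of $\Omega(p|U|)$ covered vertices exceeds the loss. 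This contradicts the maximality of $H$ and gives the $100Ck/\sqrt r$ bound.
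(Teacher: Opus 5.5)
Your outline has the right ingredients (a maximal $H$, alternating-path exchanges, short random walks, pruning to disjoint interiors, \cref{lem:cjmm}). But there is a genuine gap: the argument is not closed, and the two steps you leave vague are where the proof actually lives.

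\textbf{The regularization step.} The obstacle you flag is resolved by an idea your proposal does not contain. Do not perform the swaps and then regularize the resulting subgraph of $G$. Instead, form an auxiliary simple graph $F$ with $V(F)\subseteq U$, with one edge $uv$ for each selected alternating path from $u$ to $v$ (at most one path per pair, interiors pairwise disjoint). Then apply \cref{lem:cjmm} to $F$ itself. A nonempty $k$-regular $J\subseteq F$ says exactly which paths to exchange: each vertex of $J$ is the endpoint of exactly $k$ of them, and each interior vertex loses one $H$-edge and gains one $Q$-edge. All other paths are simply discarded. The old vertices never belong to the graph to which \cref{lem:cjmm} is applied, so nothing is lost. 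Since $H$ was maximal, gaining the $|V(J)|\ge1$ new vertices is already a contradiction. No spanning subgraph, ``correction graph'' or gluing is needed.

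\textbf{The density.} You never obtain the density that \cref{lem:cjmm} requires, namely $\avdeg(F)\ge Ck$ together with $\Delta(F)\le20\avdeg(F)$. By your own accounting, keeping each path with small probability $p$ leaves about $p|U|$ paths on $|U|$ vertices, so the average degree is $O(p)$. More fundamentally, suppose the paths have $\Theta(1/\varepsilon)$ interior vertices, as the walk bounds allow. Interior-disjoint paths spread over all $N\approx\varepsilon n$ vertices of $U$ must fit inside $|S|\le n$, so they can have average degree only $O(1)$.

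The paper gets around this in four steps:
\begin{enumerate}
\item It proves the uniform bound $a(x)\le d_x/r$ on expected departures, by induction over rounds using that each $s\in S$ has $k$ $H$-neighbours and $q$ $Q$-neighbours. Your averaged estimate $O(\ell|U|/n)$ is not enough. The uniform bound gives $\sum_u\E_u\tau\le n$. So walks of $L=\lfloor3/\varepsilon\rfloor\le3\sqrt r/100$ rounds (not $\approx\sqrt r$, since self-intersections cost $O(L^2/r)$) are good with total probability at least $3N/5$, and each $s\in S$ receives at most $2$ expected visits.
\item It inflates the path weights to $w(P)=\frac r{2L}\pi(P)$. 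This is legitimate because the weight per endpoint pair is at most $1$ and the weight through any interior vertex is at most $r/(2L)$.
\item It samples \emph{vertices} of $U$ at rate $p=1/(4\sqrt r)$, rather than sampling paths. A path then survives with probability $p^2$. A conflicting pair with four distinct endpoints survives with probability $p^4$, a relative cost of $rp^2/2=1/32$. A pair sharing an endpoint survives with probability $p^3$, a relative cost of $L/\sqrt r\le3/100$. The survivors on the sampled set $X$ have average degree of order $D=\sqrt r/(8L)=\Theta(\varepsilon\sqrt r)$.
\item A second-moment trimming at degree $6D$ gives the maximum-degree condition.
\end{enumerate}
The factor $k/\sqrt r$ in the theorem comes from requiring $\Theta(\varepsilon\sqrt r)\ge Ck$, not from the collision balance you describe.
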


We prove \cref{thm:main} by augmenting a largest $k$-regular subgraph. Let $H$ be a $k$-regular subgraph of an $r$-regular graph $G$. Write $S=V(H)$ for its covered vertices, $U=V(G)\setminus S$ for the uncovered vertices, and $Q=G-E(H)$, retaining all vertices of $G$.

A path between two vertices of $U$ is \emph{alternating} if its internal vertices lie in $S$ and its edges alternate between $Q$ and $H$, beginning and ending in $Q$. For example,
\[
u\ \xleftrightarrow{\ Q\ }\ x\ \xleftrightarrow{\ H\ }\ y
\xleftrightarrow{\ Q\ }\ v.
\]
We modify the selected subgraph $H$, keeping $G$ fixed. In this example, delete $xy$ from $H$ and add $ux$ and $yv$. Vertex $x$ loses one selected edge and gains one, and the same is true of $y$, so both keep degree $k$. The new vertices $u,v$ each gain one edge. The same exchange works along any alternating path: every internal vertex loses one $H$-edge and gains one $Q$-edge, while each endpoint gains one edge.

We therefore construct a simple auxiliary graph $F$ on a subset of $U$. Each edge $uv$ of $F$ represents an alternating path from $u$ to $v$. We require at most one path for each endpoint pair and require the represented paths to have pairwise disjoint interiors. If $F$ contains a nonempty $k$-regular subgraph $J$, its edges select exactly $k$ paths ending at every vertex of $J$. Exchanging along all of them simultaneously gives every vertex of $J$ degree $k$, while every old vertex still has degree $k$. Thus $H$ can be enlarged. The following lemma produces an auxiliary graph dense enough for this argument.

\begin{lemma}\label[lemma]{lem:auxiliary}
Let $0<\varepsilon\le1$, let $G$ be an $r$-regular graph on $n$ vertices, let $H\subseteq G$ be $k$-regular, and put $U=V(G)\setminus V(H)$. If $r>2k\ge2$ and
\[
|U|\ge\varepsilon n,
\qquad
\varepsilon\sqrt r\ge100,
\]
then there is a nonempty simple auxiliary graph $F$ with $V(F)\subseteq U$ such that
\[
\avdeg(F)\ge\frac{\varepsilon\sqrt r}{100},
\qquad
\Delta(F)\le20\avdeg(F).
\]
Each edge $uv$ of $F$ represents an alternating path from $u$ to $v$, and these paths have pairwise disjoint interiors.
\end{lemma}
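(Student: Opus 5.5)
I will build $F$ from short random alternating walks started at uncovered vertices. Each walk is then thinned by a sampling step so that the surviving paths have pairwise disjoint interiors. A final degree-trimming step enforces $\Delta(F)\le20\avdeg(F)$.

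\emph{Counting alternating paths.} Every $u\in U$ has all $r$ of its edges in $Q$. Every $x\in S$ has $r-k\ge r/2$ edges in $Q$ and $k$ edges in $H$. Consider the walk that starts at a uniform vertex of $U$, takes a uniform $Q$-edge, and then repeats two moves: if it sits at $x\in S$, it takes a uniform $H$-edge to $y$ and then a uniform $Q$-edge from $y$. It stops on first hitting $U$, or after $L\approx\sqrt r/\varepsilon$ rounds. I would run a forward/backward stationarity computation in the spirit of Goel--Kapralov--Khanna. Because $G$ is regular, the measure assigning mass $1$ per $Q$-edge-end is nearly preserved by these moves. So the expected number of visits to any vertex $x\in S$, summed over all starting points, is $O(L)$ times its share, namely $O(L|U|/n)$ per vertex per round. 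On the other hand, at each step the walk lands in $U$ with probability proportional to the fraction of $Q$-edge-ends incident to $U$. That fraction is at least $r|U|/(rn)\ge\varepsilon$, after an averaging argument that controls the correlation. Hence a constant fraction of walks hit $U$ within $O(1/\varepsilon)$ rounds. Reversibility gives a symmetric distribution on pairs $(u,v)$ joined by alternating paths of length $O(1/\varepsilon)$. Walks that revisit a vertex contain a shorter alternating path or cycle; I would discard them or shortcut them, which costs a constant factor. The case $u=v$ must also be excluded; the loss there is $O(1/r)$ because the first $Q$-step is uniform among $r$ choices.

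\emph{Sampling for disjoint interiors.} Give each vertex of $U$ about $m\approx\varepsilon\sqrt r$ independent walks, and keep only the successful paths. The expected number of paths through a fixed $x\in S$ is then $O(m|U|\cdot \ell/n)$, where $\ell=O(1/\varepsilon)$ is the typical length. To get disjoint interiors, I would retain each path independently with probability $p$, and then delete every retained path whose interior meets another retained path. A path with $\ell$ interior vertices survives with probability at least $1-p\ell\cdot O(m|U|/n)\cdot(\text{crossing factor})\ge1/2$ when $p\approx \varepsilon/\ell\cdot n/(m|U|)$, up to constants. Counting surviving paths per vertex of $U$ gives average degree of order $pm\approx\varepsilon\sqrt r$. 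The $\sqrt r$ arises by balancing the number of walks against the collisions at internal vertices, each of which has $Q$-degree $\ge r/2$. Duplicate endpoint pairs are removed, and each pair carries at most one path.

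\emph{Trimming and the main obstacle.} With $F_0$ having average degree $\ge\varepsilon\sqrt r/50$, I would remove vertices of degree above $20$ times the average. They carry few edges by Markov applied to the sum of degrees, so repeated deletion preserves half of the edges. The vertex count only drops, so the average degree stays above $\varepsilon\sqrt r/100$; $F$ is nonempty because $\varepsilon\sqrt r\ge100$. The main obstacle is the first step: controlling the visit density at internal vertices uniformly. Without it, a few hub vertices of $S$ would lie on most paths and destroy disjointness. I would handle it by first proving the $O(L|U|/n)$ expected-visit bound through the exact stationarity of the $Q$/$H$ alternating chain on directed edges. Vertices whose visit count is too large would then be truncated, losing at most a constant fraction of the paths.
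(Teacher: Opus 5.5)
\textbf{There is a genuine gap in the sampling step.} Your walk stage is the paper's walk, but independent thinning of paths cannot produce the factor $\sqrt r$. If each path is retained independently with probability $p$, the expected degree at an endpoint and the expected load at an internal vertex are multiplied by the same factor $p$. So there is nothing to balance. Your own parameters show this: with $p\approx(\varepsilon/\ell)\,n/(m|U|)$, $\ell\approx1/\varepsilon$ and $|U|\ge\varepsilon n$, the average degree is $pm\lesssim1/\ell\approx\varepsilon$, not $\varepsilon\sqrt r$.

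This is not a matter of constants. Disjoint interiors force the total interior size of the paths of $F$ to be at most $n$. So when $|U|=\varepsilon n$ and walks have length of order $1/\varepsilon$, we get $e(F)=O(|U|)$. Any $F$ spread over a constant fraction of $U$ then has $\avdeg(F)=O(1)$.

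The missing idea is to sample the \emph{endpoints}. The paper does three things:
\begin{itemize}
\item it puts each $u\in U$ into $X$ independently with probability $p=1/(4\sqrt r)$;
\item it lets each pair $\{u,v\}$ carry a good path $P$ with probability $w(P)=\frac r{2L}\pi(P)$, which is of order $r/L\approx\varepsilon r$ paths per vertex rather than $\varepsilon\sqrt r$;
\item it keeps a path only if both endpoints lie in $X$.
\end{itemize}
Given $u\in X$, each path at $u$ survives with probability $p$, so the degree is of order $pr/L\approx\varepsilon\sqrt r$. A path sharing no endpoint with a given one survives with probability only $p^2$. Using the bound of $2$ on expected visits to each $s\in S$, such conflicts number at most $2L\cdot\frac r{2L}\cdot p^2=rp^2=1/16$ per path. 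Conflicts through a shared endpoint cost $O(Lp)=O(L/\sqrt r)$, by the $1/q$ bound on hitting a specified vertex. The $\sqrt r$ comes exactly from this gap between $p$ and $p^2$.

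\textbf{Two further steps need repair.}

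First, the trimming. Markov bounds the number of high-degree vertices, not the edges at them, and a few hubs can carry most of the edges. The paper instead uses that, given $u\in X$, $\deg_A(u)$ is a sum of independent indicators. Hence $\E[\deg_A(u)^2\mid u\in X]\le(D+1)\E[\deg_A(u)\mid u\in X]$, where $D=pr/(2L)$ bounds the conditional mean degree. Summing, this bounds the expected number of edges at vertices of degree above $6D$ by $5M/12$, where $M$ is the expected number of kept paths.

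Second, the walk estimates. A uniform per-round density $O(|U|/n)$ at vertices of $S$ is not true in general. A vertex of $S$ whose $Q$-neighbours all lie in $U$ already receives $q/r>1/2$ expected arrivals in round one. The per-step hitting heuristic should also be dropped. The exact invariance on directed $Q$-edges that you mention does give the correct statement: summed over one walk from each $u\in U$, the expected number of $Q$-departures from $x$ over \emph{all} rounds is at most $d_x/r$. This yields:
\begin{itemize}
\item $\sum_u\E_u\tau\le n$, and hence $\sum_u\Prob_u(\tau>L)\le|U|/3$ for $L=\lfloor3/\varepsilon\rfloor$ by Markov;
\item at most $2$ expected visits to each $s\in S$.
\end{itemize}
Your cap $L\approx\sqrt r/\varepsilon$ is too long. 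Discarding walks with a repeated vertex costs $O(L^2/r)$, which is small only for $L\ll\sqrt r$. This cost includes $O(L/r)$ for returning to $u$ at any $Q$-step, not $O(1/r)$. The paper's choice $L\le3\sqrt r/100$ gives $(4L^2+L)/q<1/15$.
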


We use the following special case of a theorem of Chakraborti, Janzer, Methuku and Montgomery~\cite[Theorem~1.13]{CJMM2026}.

\begin{samepage}
\begin{lemma}[Chakraborti--Janzer--Methuku--Montgomery]\label[lemma]{lem:cjmm}
There is an absolute constant $C\ge1$ such that, for every integer $k\ge1$, every nonempty graph $F$ satisfying
\[
\avdeg(F)\ge Ck,
\qquad
\Delta(F)\le20\avdeg(F)
\]
contains a nonempty $k$-regular subgraph.
\end{lemma}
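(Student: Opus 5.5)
This lemma is a specialization of \cite[Theorem~1.13]{CJMM2026}, so the plan is to quote that theorem and check that its hypotheses hold, not to reprove it. As summarised in the introduction, their theorem states the following. For every fixed $\lambda\ge1$ there is a constant $c_\lambda>0$ with this property: if a graph $F$ has average degree $d$ and $\Delta(F)\le\lambda d$, then $F$ contains a $d'$-regular subgraph for every integer $d'\le c_\lambda d$. I will first recheck the exact form in \cite{CJMM2026}, in particular whether $d'$ ranges over all integers $1\le d'\le c_\lambda d$ and whether the produced subgraph is nonempty. For $d'\ge1$ nonemptiness is automatic, since a $d'$-regular graph with $d'\ge1$ on a nonempty vertex set has edges.

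The steps are then as follows. Fix $\lambda=20$ and put $C=\max\{1,1/c_{20}\}$, which is an absolute constant because $\lambda$ is fixed. Now let $F$ be nonempty with $\avdeg(F)\ge Ck$ and $\Delta(F)\le20\avdeg(F)$, and write $d=\avdeg(F)$. Then $\Delta(F)\le20d$, and
\[
k\le d/C\le c_{20}d .
\]
So the integer $d'=k\ge1$ lies in the admissible range, and the theorem gives a nonempty $k$-regular subgraph of $F$. Since $d\ge Ck\ge1$, the graph $F$ has edges, so any nondegeneracy hypothesis of the cited theorem (for example $d$ bounded below) holds. If the source theorem needs $d$ to exceed some absolute constant $d_0$, I will enlarge $C$ to $\max\{C,d_0\}$. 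This is harmless because $k\ge1$ gives $d\ge Ck\ge d_0$.

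The main obstacle is only bookkeeping: matching the normalizations of \cite{CJMM2026}. Average degree there may mean $e(F)/|V(F)|$ rather than $2e(F)/|V(F)|$, and the bounded-degree condition may be stated as $\Delta\le\lambda\,\delta$ or relative to $e/n$. Each such change alters the constants by at most a factor of $2$, and $C$ absorbs this.

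If one wanted a self-contained argument, the route in the spirit of \cite{AJMP2003,JS2023} would be the following. First pass, by random sampling, to a nearly regular subgraph whose degrees are concentrated around a value somewhat above $k$. Then apply a flow or $b$-factor argument (Tutte/Edmonds, or a max-flow min-cut deficiency count in an auxiliary bipartite double cover) to extract an exact $k$-factor of a suitable subgraph. The delicate point is removing the $\log$ factors present in the Pyber--R\"odl--Szemer\'edi approach, and that is exactly the content of \cite{CJMM2026}. Here I would simply invoke it.
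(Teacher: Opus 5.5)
Your proposal matches the paper, which also gives no proof of this lemma and simply specializes \cite[Theorem~1.13]{CJMM2026} to the fixed ratio $\Delta(F)/\avdeg(F)\le20$; with that ratio fixed, the resulting constant is absolute and can be taken as $C$. One bookkeeping point should be tightened: if the cited theorem were phrased via minimum degree, you would first pass to a nonempty subgraph $F'$ with $\delta(F')\ge\avdeg(F)/2$, which satisfies $\Delta(F')\le40\,\delta(F')$, so this is a reduction step rather than a mere factor-of-two change in constants, though it is standard and harmless.
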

\end{samepage}

\begin{proof}[Proof of \cref{thm:main}]
Take $C$ to be the constant in \cref{lem:cjmm}. If $100Ck\ge\sqrt r$, the empty subgraph gives the stated bound. Otherwise, set $\varepsilon=100Ck/\sqrt r$. Then $0<\varepsilon<1$, $\varepsilon\sqrt r=100Ck\ge100$, and $r>(100Ck)^2>2k$. Choose a $k$-regular subgraph $H$ covering as many vertices as possible, and put $U=V(G)\setminus V(H)$.

Suppose that $|U|\ge\varepsilon n$. By \cref{lem:auxiliary}, there is an auxiliary graph $F$ with
\[
\avdeg(F)\ge\frac{\varepsilon\sqrt r}{100}=Ck,
\qquad
\Delta(F)\le20\avdeg(F).
\]
Lemma~\ref{lem:cjmm} gives a nonempty $k$-regular subgraph $J$ of $F$. The simultaneous exchanges described above then enlarge $H$, contradicting its choice. Hence $|U|<\varepsilon n=100Ckn/\sqrt r$, as required.
\end{proof}

\section{Proof of Lemma~\ref{lem:auxiliary}}\label{sec:paths}

Fix $G,H,\varepsilon$ as in \cref{lem:auxiliary}. Write $S=V(H)$, $U=V(G)\setminus S$, and $Q=G-E(H)$, retaining all vertices of $G$. Set $N=|U|\ge\varepsilon n$ and $q=r-k$. Since $r>2k$, we have $q>r/2$. The degrees in $Q$ are
\[
d_x=\deg_Q(x)=
\begin{cases}
r,&x\in U,\\
q,&x\in S.
\end{cases}
\]
Thus $d_x$ is simply the degree of $x$ in the graph of unused edges.

\subsection{Expected length and visits}\label{subsec:walk-estimates}

Starting from $u\in U$, take a uniformly chosen incident $Q$-edge. Stop if the arrival vertex lies in $U$; otherwise take a uniformly chosen incident $H$-edge and repeat. These steps are always possible: $d_x>0$ everywhere, and every vertex of $S$ has $k\ge1$ incident $H$-edges. The graphs stay fixed, and the walk may revisit vertices and edges.

A \emph{round} consists of a $Q$-step and, if the walk has not stopped, the following $H$-step. Let $\tau$ be the number of rounds, possibly infinite, and write $\Prob_u,\E_u$ for probability and expectation with start $u$.

For each vertex $x$ and integer $j\ge1$, define
\[
b_j(x)=\sum_{u\in U}
\Prob_u(\text{round $j$ occurs and starts at $x$}).
\]
This counts expected $Q$-departures from $x$ in round $j$ alone, summed over all starting vertices. For $m\ge0$, set
\[
a_m(x)=\sum_{j=1}^{m}b_j(x).
\]
Thus $a_m(x)$ counts expected departures in the first $m$ rounds, including repeated visits. In particular, $a_0(x)=0$ by the empty-sum convention. We claim that
\begin{equation}\label{eq:visits}
a_m(x)\le d_x/r\qquad\text{for every $m\ge0$ and every $x$.}
\end{equation}

We prove the claim by induction. The base case follows from $a_0(x)=0$. Suppose the claim holds for $m$. If $x\in U$, only the walk starting at $x$ can depart from $x$, and only in its first round. Hence $a_{m+1}(x)=1=d_x/r$.

Fix $s\in S$. A departure from $s$ in round $j+1$ follows an $H$-step $ys$ in round $j$, preceded by a $Q$-step $xy$, where $y\in N_H(s)$ and $x\in N_Q(y)$. Here $N_H(s)$ and $N_Q(y)$ denote the respective neighbour sets. Conversely, taking these two steps leads to a departure from $s$ in the next round.

Given a departure from $x$, the walk chooses $xy$ with probability $1/d_x$, then $ys$ with probability $1/k$. The combined probability is $1/(k d_x)$. For a walk starting at $u$, the probability of this sequence in round $j$ is therefore
\[
\Prob_u(\text{round $j$ occurs and starts at $x$})\,\frac1{k d_x}.
\]
This includes the probability that the departure from $x$ occurs. Summing over starting vertices and possible $x,y$ gives
\[
b_{j+1}(s)=\sum_{y\in N_H(s)}\sum_{x\in N_Q(y)}\frac{b_j(x)}{k d_x}.
\]

No walk starts at $s$, so $b_1(s)=0$. Its departures in the first $m+1$ rounds are therefore exactly those in rounds $2,\ldots,m+1$. Summing the preceding identity over $j=1,\ldots,m$ gives
\[
\begin{aligned}
a_{m+1}(s)
&=\sum_{j=1}^{m}b_{j+1}(s)\\
&=\sum_{y\in N_H(s)}\sum_{x\in N_Q(y)}\frac{\sum_{j=1}^{m}b_j(x)}{k d_x}\\
&=\sum_{y\in N_H(s)}\sum_{x\in N_Q(y)}\frac{a_m(x)}{k d_x}.
\end{aligned}
\]
Repeated visits contribute separately; these equalities use linearity of expectation and do not require independent visits. By the induction hypothesis, each summand is at most $1/(kr)$. There are $k$ choices of $y$ and $q$ choices of $x$ for each $y$, so
\[
a_{m+1}(s)\le\frac{kq}{kr}=\frac qr=\frac{d_s}{r}.
\]
This proves~\eqref{eq:visits}.

Let $a(x)=\lim_m a_m(x)$. Since each round has one $Q$-departure, summing nonnegative terms gives
\begin{equation}\label{eq:length}
\sum_{u\in U}\E_u\tau=\sum_x a(x)\le\sum_x d_x/r\le n.
\end{equation}
In particular, for each starting vertex, the probability that the walk continues forever is zero.

We next count visits to a fixed $s\in S$. A \emph{$Q$-arrival} at $s$ occurs when the walk reaches $s$ along a $Q$-edge; an \emph{$H$-arrival} occurs when it reaches $s$ along an $H$-edge. These occur after the first and second steps of a round, respectively. We count repeated visits separately and sum expectations over all starting vertices.

\emph{$Q$-arrivals.} The expected number of $Q$-arrivals at $s$ is the expected number of $Q$-departures from vertices $x\in N_Q(s)$ that reach $s$. The expected number of departures from $x$ is $a(x)$, and each departure reaches $s$ with probability $1/d_x$. Thus the expected number of $Q$-arrivals at $s$ equals
\[
\sum_{x\in N_Q(s)}\frac{a(x)}{d_x}
\le\sum_{x\in N_Q(s)}\frac1r
=\frac qr.
\]
The inequality uses $a(x)\le d_x/r$, and the last equality uses $|N_Q(s)|=q$.

\emph{$H$-arrivals.} Fix $s\in S$. Every $H$-arrival at $s$ in round $j$ is followed by a $Q$-departure from $s$ in round $j+1$, since the walk does not stop in $S$. Conversely, every $Q$-departure from $s$ follows such an $H$-arrival, since no walk starts at $s$. Hence the expected number of $H$-arrivals in round $j$ is $b_{j+1}(s)$. Summing over rounds, and using $b_1(s)=0$, gives
\[
\sum_{j\ge1}b_{j+1}(s)=\sum_{j\ge1}b_j(s)=a(s)\le\frac qr
\qquad(s\in S).
\]
Every visit to $s\in S$ is either a $Q$-arrival or an $H$-arrival. We have therefore proved the following:
\begin{enumerate}
\item The expected number of rounds, summed over all starting vertices $u\in U$, is at most $n$, by~\eqref{eq:length}.
\item For each fixed $s\in S$, the expected number of visits to $s$, summed over all starting vertices and counting repeated visits, is at most $2q/r\le2$.
\end{enumerate}

\subsection{Short simple paths}

Recall that $N=|U|$ is the number of vertices left uncovered by $H$, and $N\ge\varepsilon n$.

Set $L=\lfloor3/\varepsilon\rfloor$. Call a walk \emph{good} if it stops within $L$ rounds and has no repeated vertex. Since $\tau$ is integer-valued, \eqref{eq:length} and Markov's inequality give
\[
\sum_{u\in U}\Prob_u(\tau>L)\le\frac1{L+1}\sum_{u\in U}\E_u\tau\le\frac n{L+1}\le\frac{\varepsilon n}{3}\le\frac N3.
\]
We next bound the probability of a repetition in the first $L$ rounds.

Condition on the history before a round, with current vertex $x$. The probability of a $Q$-arrival at any specified vertex is at most $1/d_x\le1/q$. For a specified $s\in S$, the probability of an $H$-arrival at $s$ is
\[
\frac{|N_Q(x)\cap N_H(s)|}{k d_x}\le\frac1q,
\]
since $|N_H(s)|=k$. There is no $H$-arrival when the $Q$-step ends the walk.

Suppose round $j$ begins, where $j\le L$. Each of the preceding $j-1$ rounds had both a $Q$-step and an $H$-step, since the walk has not stopped. These gave two arrivals in $S$ per round. Thus at most $2(j-1)\le2L$ distinct vertices of $S$ have already been visited.

Given the history before round $j$, each of its at most two arrivals hits any specified previously visited vertex with probability at most $1/q$. A union bound over the two arrivals and at most $2L$ previously visited vertices of $S$ gives
\[
2\cdot(2L)\cdot\frac1q=\frac{4L}{q}
\]
as an upper bound on the probability of hitting a previously visited vertex of $S$ in that round. Summing over at most $L$ rounds gives a bound of $4L^2/q$.

The two arrivals within a single round cannot coincide, since they are joined by an $H$-edge and there are no loops.

The remaining possible repetition is that the walk ends at its starting vertex $u$. The walk stops on its first return to $U$, so all vertices between its start and end lie in $S$. Only a $Q$-step can return to $u$, with probability at most $1/q$ per round, giving a total bound of $L/q$. These cases cover every possible repetition. Since $q>r/2$, $L\le3/\varepsilon$ and $\varepsilon\sqrt r\ge100$,
\[
\Prob_u(\tau\le L\text{ but the walk is not good})
\le\frac{4L^2+L}{q}\le\frac{10L^2}{r}
\le\frac{90}{\varepsilon^2r}<\frac1{15}.
\]
Together with the bound on long walks, this gives
\begin{equation}\label{eq:good}
\sum_{u\in U}\Prob_u(\text{good})\ge N-\frac N3-\frac N{15}=\frac{3N}{5}.
\end{equation}
Every good walk is a simple alternating path with distinct endpoints in $U$ and at most $2L$ internal vertices, all in $S$.

\subsection{Path weights}

Let $\cP$ be the family of all unoriented good paths. We assign weights using their traversal probabilities. If $P$ has endpoints $u,v$ and $t$ $Q$-edges, the probability of traversing exactly $P$ from $u$ to $v$ is
\[
\pi(P)=\frac1{r q^{t-1}k^{t-1}}.
\]
The same formula holds from $v$ to $u$. Define
\[
w(P)=\frac r{2L}\pi(P).
\]
Each unoriented path is counted from both endpoints in~\eqref{eq:good}, so
\[
W:=\sum_{P\in\cP}w(P)
=\frac r{4L}\sum_{u\in U}\Prob_u(\text{good})
\ge\frac{3rN}{20L}.
\]

We need three elementary bounds on these weights. For distinct $u,v\in U$ and $s\in S$,
\begin{equation}\label{eq:weights}
\begin{array}{l|c}
\text{Paths being counted}&\text{Total weight at most}\\ \hline
\text{with endpoints }u,v&1\\
\text{with endpoint }u\text{ and internal vertex }s&2\\
\text{with internal vertex }s&r/(2L).
\end{array}
\end{equation}
For the first two rows, a walk starting at $u$ ends at $v$ within $L$ rounds with probability at most $L/q$, and visits $s$ within those rounds with probability at most $2L/q$. Multiplying by $r/(2L)$ and using $q>r/2$ gives the stated bounds.
For the last row, the expected number of visits to a fixed $s\in S$, summed over all starting vertices, is at most $2$ by \cref{subsec:walk-estimates}. A good path visits $s$ at most once, and each unoriented path is counted from its two endpoints. Hence
\[
\sum_{P:\,s\text{ internal}}w(P)
\le \frac r{4L}\cdot2=\frac r{2L}.
\]

\subsection{Selecting paths with disjoint interiors}

We now select paths at random and delete enough of them to make their interiors disjoint. Set
\[
p=\frac1{4\sqrt r},
\qquad
D=\frac{pr}{2L}=\frac{\sqrt r}{8L}.
\]
Since $L/\sqrt r\le3/100$, we have $D\ge4$. Form $X$ by including each vertex of $U$ independently with probability $p$.
For each unordered pair $\{u,v\}\subseteq U$, independently choose one path $P$ joining it with probability $w(P)$, or choose none with the remaining probability. The first row of~\eqref{eq:weights} makes this possible. Make these choices independently of $X$, and keep a chosen path only if both endpoints lie in $X$.

The chosen paths represent a simple graph $A$ on $X$. Each path $P$ is chosen with probability $p^2w(P)$, so
\begin{equation}\label{eq:edges}
M:=\E e(A)=Wp^2,
\qquad
\E|X|=Np.
\end{equation}

Let $T$ count unordered pairs of chosen paths whose interiors meet. Paths with the same endpoint pair never coexist. We distinguish whether the two paths have a common endpoint.

If two paths have one common endpoint, then for a fixed path $P$ the total weight of paths sharing both an endpoint and an internal vertex with $P$ is at most
\[
2\cdot(2L)\cdot2=8L.
\]
Here we choose one of the two endpoints of $P$, one of at most $2L$ internal vertices, and use the second row of~\eqref{eq:weights}. All three endpoints must lie in $X$. Summing over $P$ counts each unordered pair twice, so these pairs contribute at most $4LWp^3$ to $\E T$.

If the paths have no common endpoint, then for a fixed $P$ the total weight of paths whose interior meets that of $P$ is at most
\[
(2L)\frac r{2L}=r
\]
by the last row of~\eqref{eq:weights}. All four endpoints must lie in $X$. Again dividing by two, these pairs contribute at most $(r/2)Wp^4$. Consequently,
\begin{equation}\label{eq:conflicts}
\E T\le4LWp^3+\frac r2Wp^4
=M\left(\frac L{\sqrt r}+\frac1{32}\right)<\frac M{12},
\end{equation}
using $L/\sqrt r\le3/100$.

We also control the auxiliary degrees. Conditional on $u\in X$, only the other endpoint $v$ still needs to lie in $X$. Thus each other vertex $v\in U$ contributes an edge at $u$ with probability $p$ times the total weight of paths with endpoints $u,v$. These indicators are independent, since they depend on different endpoint pairs and different membership choices. By the definition of $w(P)$,
\[
\begin{aligned}
\E[\deg_A(u)\mid u\in X]
&=p\sum_{P:\,u\text{ endpoint}}w(P)
=\frac{pr}{2L}\sum_{P:\,u\text{ endpoint}}\pi(P)\\
&=\frac{pr}{2L}\Prob_u(\text{good})\le D.
\end{aligned}
\]
Here the probabilities $\pi(P)$ sum to $\Prob_u(\text{good})$, since every good walk starting at $u$ follows exactly one of these paths.
The variance of a sum of independent zero-one variables is at most its mean, and therefore
\[
\E[\deg_A(u)^2\mid u\in X]
\le(D+1)\E[\deg_A(u)\mid u\in X].
\]

Delete all auxiliary edges incident to vertices whose degree in $A$ exceeds $6D$. Let $Z$ be the number of edges deleted. Every deleted edge has at least one such endpoint, so summing their degrees counts every deleted edge at least once. Since $z\le z^2/(6D)$ for $z>6D$, we have
\[
\begin{aligned}
Z
&\le\sum_{\substack{u\in X\\\deg_A(u)>6D}}\deg_A(u)\\
&\le\frac1{6D}\sum_{\substack{u\in X\\\deg_A(u)>6D}}\deg_A(u)^2
\le\frac1{6D}\sum_{u\in X}\deg_A(u)^2.
\end{aligned}
\]
Taking expectations, using $\Prob(u\in X)=p$ and $D \geq 4$ gives
\[
\begin{aligned}
\E Z
&\le\frac1{6D}\E\left[\sum_{u\in X}\deg_A(u)^2\right]\\
&=\frac1{6D}\sum_{u\in U}p\,\E[\deg_A(u)^2\mid u\in X]\\
&\le\frac{D+1}{6D}\sum_{u\in U}p\,\E[\deg_A(u)\mid u\in X]
&= \frac{D+1}{6D} 2M
&\leq\frac{5M}{12}.
\end{aligned}
\]

Finally process the intersecting pairs in any fixed order. Whenever both paths are still present, delete one of them. At most $T$ additional auxiliary edges are deleted. Let $F$ be the remaining auxiliary graph on $X$, keeping isolates. The represented paths now have pairwise disjoint interiors, $\Delta(F)\le6D$, and
\[
\E e(F)\ge M-\E Z-\E T>\frac M2\ge\frac{3NpD}{20},
\]
where $M=Wp^2\ge(3/10)NpD$. Comparing the number of edges with the number of sampled vertices gives
\[
\E\left[e(F)-\frac{3D}{20}|X|\right]
>\frac{3NpD}{20}-\frac{3D}{20}Np=0.
\]
Hence some outcome satisfies $e(F)>(3D/20)|X|$. For that outcome,
\[
\avdeg(F)>\frac{3D}{10}=\frac{3\sqrt r}{80L}
\ge\frac{\varepsilon\sqrt r}{100},
\qquad
\Delta(F)\le6D<20\avdeg(F),
\]
where we used $L\le3/\varepsilon$. This proves \cref{lem:auxiliary}.

\section{The sharp degree-two case}\label{sec:k2}

\begin{theorem}\label{thm:k2}
For every odd integer $r\ge3$,
\[
\delta_2(r)=\frac1{r^2-3}.
\]
\end{theorem}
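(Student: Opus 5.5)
The lower bound $\delta_2(r)\ge1/(r^2-3)$ comes from the O--West construction discussed in \cref{subsec:sharp-construction}. I would just verify that in that graph every $2$-regular subgraph must omit at least one vertex per $r^2-3$ vertices. The reason is that a cycle never uses a bridge, so each forced omission can be checked inside one block. The real work is the upper bound: every $r$-regular $G$ with $r$ odd has a $2$-regular subgraph omitting at most $n/(r^2-3)$ vertices. Following Choi et al., I would delete all bridges of $G$. The components of what remains (the \emph{blocks}) are then the nodes of a forest, the bridge tree $\mathcal T$, whose edges are the bridges. A $2$-regular subgraph is a disjoint union of cycles, and every cycle lies inside a single block. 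So it suffices to choose a $2$-regular subgraph inside each block separately and to bound the total number of omitted vertices.

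\emph{Local step.} Let $B$ be a block, and let $j$ be the number of bridge endpoints in $B$, counted with multiplicity. Then $B$ is $2$-edge-connected, or a single vertex. Its vertices have degree $r$, except that each bridge endpoint loses one degree for each bridge at it. I would aim to show that $B$ has a $2$-regular subgraph omitting at most $\max(j-c,0)$ vertices, for a suitable small constant $c$ (expected $c=1$, or $c=2$ in the non-trivial cases). The tool is Edmonds' $b$-factor polytope. Choose the vertex set $Y\subseteq V(B)$ to be covered by removing a few bridge endpoints, and set $b=2$ on $Y$. Then exhibit a fractional point: $x_e=2/r$ on edges inside $Y$, corrected near the removed vertices so that every vertex of $Y$ has fractional degree exactly $2$. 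After this, check the blossom inequalities $x(E(S))+x(F)\le\lfloor(b(S)+|F|)/2\rfloor$ for $S\subseteq Y$ and $F\subseteq\delta(S)$ with $b(S)+|F|$ odd. Since $b$ is even, only odd $|F|$ matters. Here $2$-edge-connectivity and odd $r$ are used: any cut $\delta(S)$ inside $B$ has at least $2$ edges, and $|\delta(S)|$ has the parity of $r|S|$ minus the bridge-degree deficiency. A counting argument then shows that $x(\delta(S)\setminus F)$ is large enough. Integrality of the polytope gives the $2$-factor on $Y$.

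\emph{Global counting.} Only leaves of $\mathcal T$ (and isolated blocks with $j=0$, which lose nothing) have no slack. So I would charge each omitted vertex to a leaf block, or to the branch of $\mathcal T$ hanging off a high-degree node. Next I would lower-bound the size of a leaf block. It has exactly one vertex of degree $r-1$ and all others of degree $r$. Hence $|B|r-1$ is even, so $|B|$ is odd and $|B|\ge r+2$. More precisely, $B$ cannot be nearly complete without a parity contradiction, and this gives the bound needed so that each omitted vertex is paid for by at least $r^2-3$ vertices. The plan is a discharging argument on $\mathcal T$: each leaf block sends its vertices inward, and the $r^2-3$ count is tight exactly for the O--West configuration.

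The main obstacle is the local step, namely verifying the blossom inequalities near the bridge endpoints for general odd $r$. The cubic case of Choi et al.\ was handled by ad hoc structure, which does not generalise. I would first try the uniform point $x_e=2/r$ combined with removing the minimum set of bridge endpoints, and check the odd-set constraints by parity of $|\delta(S)|$. If small sets $S$ violate them, I would treat those small dense pieces separately, since they are essentially the extremal leaf blocks.
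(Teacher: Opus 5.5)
Your lower bound is fine, since the $t$ tree vertices are single-vertex blocks all of whose edges are bridges. The upper bound, however, has a genuine gap in the local step, both in its quantitative target and in its mechanism.

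\emph{The target is too weak.} A guarantee of $\max(j-c,0)$ omitted vertices per block, with $c$ a constant, cannot imply the theorem, because each omitted vertex must be paid for by about $r$ bridge endpoints, not one. Take $r=5$, let $B=K_5$, and attach to each vertex of $B$, by a bridge, a copy of $L_5$. This graph is $5$-regular with $n=40$, so the theorem allows at most $\lfloor 40/22\rfloor=1$ omission. Your local guarantee for $B$ ($j=5$) allows $3$, and taking disjoint copies shows the guarantee alone cannot give $\delta_2(5)\le1/22$. In general, summing over the bridge tree gives $\sum_i\max(d_{\mathcal T}(i)-2,0)=\ell-2$, where $\ell$ is the number of leaves. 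Since $n\ge\ell(r+2)$, this only yields a rate below $1/(r+2)$, not $1/(r^2-3)$; no discharging can recover the missing factor of $r$.

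What is needed is the estimate $\lfloor D_r(B)/r\rfloor$, i.e.\ $\lfloor d_{\mathcal T}(i)/r\rfloor$ for a block of an $r$-regular graph. With it, the tree count gives $rt\le c+t-1$, so $t\le(c-1)/(r-1)$. You then still need $c-1\le(r-1)(n-2(r+2))/(r^2-3)$. This is the O--West bridge bound, which the paper cites and which your leaf-charging would have to reprove.

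\emph{The mechanism is also the wrong architecture.} You fix $Y$ in advance by deleting bridge endpoints and then ``correct'' $x_e=2/r$ near the deleted vertices. That correction is exactly the part you leave open, and there is no reason a good choice of $Y$ admits a perfect fractional $2$-factor of $B[Y]$.

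The missing idea is to let the polytope choose the omitted vertices. Add a loop $\ell_v$ at every vertex, and set $x_e=2/r$ on edges of $B$ and $x_{\ell_v}=(r-d_B(v))/r$ on loops. Every vertex then has fractional degree exactly $2$. Loops never cross cuts, so the odd-cut condition needs no parity argument. For $|A|=1$ the left side equals $1+2(|\partial S|-2)/r\ge1$, because $B$ is bridgeless. For $|A|\ge3$ it is at least $(1-2/r)|A|\ge|A|/3\ge1$.

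Edmonds' theorem then gives a random $2$-factor of the looped graph using $D_r(B)/r$ loops in expectation. Deleting the looped vertices leaves a $2$-regular subgraph of $B$ omitting at most $\lfloor D_r(B)/r\rfloor$ vertices.
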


For even $r$, Petersen's $2$-factorization theorem~\cite{Petersen1891} gives $\delta_2(r)=0$. The case $r=3$ of \cref{thm:k2} is due to Choi et al.~\cite{CKKPW2019}.

We follow the approach of Choi et al.: delete the bridges and bound the number of vertices omitted in each remaining component. Their argument does not generalize to odd $r > 3$. We extend their approach by adding loops to represent omitted vertices and applying the degree-two case of Edmonds' theorem. The resulting estimate also proves the bound proposed by Choi et al. in terms of bridges and degree deficiency; see \cref{prop:k2-bridges}.

\subsection{The local estimate}

Throughout this section, let $r\ge3$ be odd. Multigraphs may have loops, each contributing two to the degree. For a multigraph $B$ with $\Delta(B)\le r$, define its degree deficiency by
\[
D_r(B)=\sum_{v\in V(B)}(r-d_B(v)).
\]
For $r=3$, the following estimate appears in the proof of Choi et al.'s Theorem~2.3~\cite{CKKPW2019}.

\begin{lemma}\label[lemma]{lem:k2-local}
Let $r\ge3$ be odd, and let $B$ be a connected bridgeless multigraph with $\Delta(B)\le r$. Then $B$ has a $2$-regular subgraph omitting at most
\[
\left\lfloor\frac{D_r(B)}r\right\rfloor
\]
vertices.
\end{lemma}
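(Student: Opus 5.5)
The plan is to encode omitted vertices by loops and obtain an integral solution from Edmonds' $b$-factor polytope with $b\equiv2$, starting from a fractional solution that spreads weight evenly. Form $B^{+}$ from $B$ by adding one new loop $\ell_v$ at every vertex $v$, each edge of $B^{+}$ having capacity $1$. A $2$-factor of $B^{+}$ is an integral $x$ with $x(\delta(v))=2$ for all $v$, where a loop counts twice. In it, a vertex whose loop is chosen has no other chosen edges. Deleting the chosen loops therefore leaves a $2$-regular subgraph of $B$ omitting exactly the loop vertices. Here a $2$-regular subgraph of the multigraph $B$ may contain digons formed by parallel edges, as is natural in the multigraph setting. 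It therefore suffices to find a $2$-factor of $B^{+}$ using at most $\lfloor D_r(B)/r\rfloor$ loops.

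Define the fractional point $x_e=2/r$ on every edge of $B$ and $x_{\ell_v}=(r-d_B(v))/r\in[0,1]$ on the new loops. The degree at $v$ is $d_B(v)\cdot 2/r+2(r-d_B(v))/r=2$, so the degree constraints hold. The total loop weight is $\sum_v x_{\ell_v}=D_r(B)/r$. If $x$ lies in the $2$-factor polytope, then minimizing the linear function $\sum_v y_{\ell_v}$ over the polytope gives an integral vertex $y$ with $\sum_v y_{\ell_v}\le D_r(B)/r$. Since $\sum_v y_{\ell_v}$ is an integer, it is at most $\lfloor D_r(B)/r\rfloor$, which proves \cref{lem:k2-local}.

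The main step is checking Edmonds' blossom inequalities. For the capacitated version with $b\equiv2$, these say: for every $W\subseteq V$ and $F\subseteq\delta(W)$ with $|F|$ odd (equivalently $2|W|+|F|$ odd),
\[
x(E[W])+x(F)\le |W|+\frac{|F|-1}{2},
\]
where $E[W]$ includes loops. I expect the main care to go into stating this correctly for loops and parallel edges. Summing the degree equations over $W$ gives $2x(E[W])+x(\delta(W))=2|W|$. With $m=|\delta(W)|$ and $f=|F|$, the inequality becomes
\[
\frac{2f}{r}-\frac{m}{r}\le\frac{f-1}{2}.
\]
Since $f\le m$, the left side is at most $f/r\le f/3$. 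This is at most $(f-1)/2$ whenever $f\ge3$. If $f=1$, we need $m\ge2$. Because $F\subseteq\delta(W)$ is nonempty, $W$ is a nonempty proper subset of $V(B)$. Then $\delta(W)$ is a nonempty edge cut of the connected graph $B$, and bridgelessness forces $m\ge2$. The constraints $0\le x\le1$ are immediate, so $x$ is in the polytope and the argument is complete.
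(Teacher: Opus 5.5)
Your proposal is correct and follows the paper's proof essentially step for step. The shared ingredients are the loop-augmented multigraph $B^{+}$, the fractional point $x_e=2/r$, $x_{\ell_v}=(r-d_B(v))/r$, and the two-case check ($|F|=1$ via bridgelessness, $|F|\ge3$ via $r\ge3$). Your blossom form $x(E[W])+x(F)\le|W|+(|F|-1)/2$ is equivalent to the paper's odd-cut condition through the summed degree equations, and taking an integral optimal vertex is just another way of phrasing the paper's random $2$-factor with prescribed marginals.
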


To prove the lemma, we add one loop at each vertex of $B$. A $2$-factor of the enlarged graph consists of a $2$-regular subgraph of $B$ together with an added loop at each omitted vertex. Thus we seek a $2$-factor using few added loops.

We use the following consequence of Edmonds' theorem~\cite{Edmonds1965}; see Kobayashi~\cite[Section~2]{Kobayashi2022} for a formulation allowing loops. For a multigraph $J$ and $S\subseteq V(J)$, let $\partial_J S$ denote the set of edges with exactly one endpoint in $S$.

\begin{lemma}\label[lemma]{lem:edmonds-two-factor}
Let $J$ be a multigraph and assign a number $x_e\in[0,1]$ to each edge $e\in E(J)$. Suppose that the weights incident with each vertex sum to two, counting loops twice, and that
\begin{equation}\label{eq:two-factor-cut}
\sum_{e\in\partial_J S\setminus A}x_e
+\sum_{e\in A}(1-x_e)\ge1
\end{equation}
for every $S\subseteq V(J)$ and every subset $A\subseteq\partial_J S$ of odd size. Then there is a probability distribution on the $2$-factors $F$ of $J$ such that $\Prob(e\in E(F))=x_e$ for every edge $e$.
\end{lemma}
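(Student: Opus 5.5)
The plan is to read \cref{lem:edmonds-two-factor} off from Edmonds' description of the capacitated perfect $b$-matching polytope~\cite{Edmonds1965}, taking $b\equiv2$ and all edge capacities equal to $1$. Set $P$ to be the convex hull of the $0/1$ vectors $x\in\{0,1\}^{E(J)}$ with $\sum_{e\ni v}x_e=2$ at every vertex $v$, loops counted twice. These integer points are exactly the indicator vectors of the $2$-factors of $J$. If the given $x$ lies in $P$, write it as a convex combination $x=\sum_F\lambda_F\mathbf 1_{E(F)}$ over $2$-factors $F$. Then $\Prob(F)=\lambda_F$ is a probability distribution with $\Prob(e\in E(F))=x_e$ for every edge $e$, as required. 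So everything reduces to showing that the hypotheses of the lemma place $x$ in $P$.

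For this I would quote the polyhedral theorem in the form that allows loops, as in Kobayashi~\cite[Section~2]{Kobayashi2022}, or equivalently Schrijver's statement of Edmonds' theorem for capacitated $b$-matchings. That theorem describes $P$ by three families of constraints:
\begin{itemize}
\item the bounds $0\le x_e\le 1$;
\item the degree equations $x(\delta(v))=b(v)$, with loops counted twice;
\item for every $S\subseteq V(J)$ and every $A\subseteq\partial_J S$ such that $b(S)+|A|$ is odd, the blossom inequality $x(\partial_J S\setminus A)-x(A)\ge 1-|A|$.
\end{itemize}
The blossom inequality can equivalently be written as $x(E[S])+x(A)\le\lfloor (b(S)+|A|)/2\rfloor$. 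The two forms are related by summing the degree equations over $S$, which gives $2x(E[S])+x(\partial_J S)=b(S)$; here loops lie in $E[S]$ and never in $\partial_J S$.

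With $b\equiv2$, the quantity $b(S)=2|S|$ is even. So the parity condition is exactly that $|A|$ is odd. Rearranging, the blossom inequality becomes $x(\partial_J S\setminus A)+\sum_{e\in A}(1-x_e)\ge1$, which is precisely~\eqref{eq:two-factor-cut}. The bounds $x_e\in[0,1]$ and the degree equations are assumed directly. Hence $x\in P$, and the first paragraph finishes the proof.

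The main obstacle is getting the version of Edmonds' theorem right in the presence of loops and parallel edges. Many textbook statements are for loopless graphs, or for uncapacitated $b$-matchings, where a loop could be taken with multiplicity. I would handle this by citing a formulation that explicitly allows loops with capacity $1$, such as Kobayashi's. I would then check one point directly: because loops never belong to a cut $\partial_J S$, they enter the description only through the degree equations, with coefficient $2$. If a loopless reduction were preferred instead, one could subdivide each loop into a gadget. However, naive gadgets disturb the parity of $b(S)$, so I would not pursue that route unless the citation proved unavailable.
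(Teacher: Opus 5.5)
Your proposal is correct and takes the same route as the paper. Both treat the lemma as Edmonds' capacitated perfect $b$-matching polytope theorem with $b\equiv2$ and unit capacities, in a loop-allowing form such as Kobayashi's, and read off the distribution from a convex combination of $2$-factor indicator vectors. Your checks that $b(S)=2|S|$ reduces the parity condition to $|A|$ odd, and that the blossom inequality rearranges to \eqref{eq:two-factor-cut}, are more explicit than the paper, which only explains why the cut condition is necessary and cites Edmonds for sufficiency.
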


The cut condition has a simple interpretation. A $2$-factor uses an even number of edges across every cut, so its selected cut edges cannot equal an odd set $A$. It must therefore use an edge of $\partial_J S\setminus A$ or omit an edge of $A$. Taking expectations gives \eqref{eq:two-factor-cut}. Edmonds' theorem states that these conditions, together with the degree and weight conditions, suffice.

\begin{proof}[Proof of \cref{lem:k2-local}]
Add a loop $\ell_v$ at every vertex $v$ of $B$, obtaining a multigraph $B^+$. Set
\[
x_e=\frac2r\quad(e\in E(B)),
\qquad
x_{\ell_v}=\frac{r-d_B(v)}r\quad(v\in V(B)).
\]
These weights lie in $[0,1]$. At each vertex $v$, their sum, counting loops twice, is
\[
\frac{2d_B(v)}r+2\frac{r-d_B(v)}r=2.
\]

We check \eqref{eq:two-factor-cut}. Fix $S\subseteq V(B^+)$ and an odd set $A\subseteq\partial_{B^+}S$, and write $c=|\partial_{B^+}S|$ and $a=|A|$. Loops do not cross cuts, so every edge in this cut belongs to $B$ and has weight $2/r$. Since $a\ge1$ and $B$ has no bridge, $c\ge2$. If $a=1$, the left side of \eqref{eq:two-factor-cut} is
\[
\frac2r(c-1)+1-\frac2r
=1+\frac{2(c-2)}r\ge1.
\]
If $a\ge3$, then $c\ge a$ and $r\ge3$ give
\[
\frac2r(c-a)+\left(1-\frac2r\right)a
\ge\frac a3\ge1.
\]

Lemma~\ref{lem:edmonds-two-factor} therefore gives a random $2$-factor $F$ of $B^+$ for which the expected number of added loops is
\[
\E\bigl|\{v:\ell_v\in E(F)\}\bigr|
=\sum_{v\in V(B)}x_{\ell_v}
=\frac{D_r(B)}r.
\]
Some $2$-factor consequently uses at most $\lfloor D_r(B)/r\rfloor$ added loops. Delete the vertices carrying those loops. No other selected edge is incident with such a vertex, so every remaining vertex keeps degree two. The remaining subgraph lies in $B$ and has the required order.
\end{proof}

\subsection{Bridges and the upper bound}

Recall that $f_2(G)$ is the largest number of vertices in a $2$-regular subgraph of $G$. The local estimate yields the following bound, proposed with a ceiling in place of the floor by Choi et al.~\cite[Section~1]{CKKPW2019}.

\begin{proposition}\label[proposition]{prop:k2-bridges}
Let $r\ge3$ be odd, and let $G$ be a multigraph with $\Delta(G)\le r$, $c$ bridges and degree deficiency $d=D_r(G)=\sum_{v\in V(G)}(r-d_G(v))$. Then
\[
|V(G)|-f_2(G)\le
\max\left\{0,\left\lfloor\frac{d+c-1}{r-1}\right\rfloor\right\}.
\]
\end{proposition}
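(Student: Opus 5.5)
The plan is to cut $G$ at its bridges, apply \cref{lem:k2-local} to each bridgeless piece, and then sum the local bounds. For the summation, a simple inequality for the floor function together with an edge count in the bridge forest should reduce everything to the quantity $d+c-1$. I would first reduce to the case where $G$ is connected.

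Suppose $G$ has components $G_1,\dots,G_m$ with deficiencies $d_j$ and $c_j$ bridges. Then $|V(G)|-f_2(G)\le\sum_j(|V(G_j)|-f_2(G_j))$. Discard every component whose bound is $0$. Each remaining component has a positive bound, so it must satisfy $d_j+c_j\ge1$. Granting the connected case in the form $|V(G_j)|-f_2(G_j)\le (d_j+c_j-1)/(r-1)$ whenever the left side is positive, the surviving terms sum to at most $(d+c-1)/(r-1)$. The total is an integer, so it is also at most the floor of this quantity. The nonnegativity of the discarded terms is exactly what justifies dropping their $-1$'s.

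Now let $G$ be connected and delete its $c$ bridges. This leaves bridgeless connected pieces $B_1,\dots,B_{c+1}$, some of which may be single vertices; they form a tree whose edges are the bridges. Let $b_i$ be the number of bridge endpoints in $B_i$, so that $\sum_i b_i=2c$. Each bridge endpoint lowers the degree by one, so $D_r(B_i)=d_i+b_i$, where $d_i$ is the deficiency in $G$ of the vertices of $B_i$. Each $B_i$ has maximum degree at most $r$, so \cref{lem:k2-local} gives a $2$-regular subgraph of $B_i$ omitting at most $\lfloor (d_i+b_i)/r\rfloor$ vertices. The union of these subgraphs is a $2$-regular subgraph of $G$ that avoids all bridges. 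Hence
\[
|V(G)|-f_2(G)\le\sum_i\left\lfloor\frac{d_i+b_i}{r}\right\rfloor .
\]

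The key elementary inequality is that $\lfloor x/r\rfloor\le (x-1)/(r-1)$ for every integer $x$ with $\lfloor x/r\rfloor\ge1$. To see this, write $m=\lfloor x/r\rfloor$. Then $x\ge mr$, so $(x-1)/(r-1)\ge (mr-1)/(r-1)\ge m$, where the last step is equivalent to $m\ge1$. Pieces with $\lfloor x_i/r\rfloor=0$ contribute nothing. We must make sure that dropping them does not lose the $-1$ they would add. If $c\ge1$, every piece meets a bridge, so $x_i=d_i+b_i\ge1$ and $x_i-1\ge0$. Therefore
\[
\sum_i\left\lfloor\frac{x_i}{r}\right\rfloor\le\sum_i\frac{x_i-1}{r-1}=\frac{d+2c-(c+1)}{r-1}=\frac{d+c-1}{r-1}.
\]
Since the left side is an integer, we may take the floor on the right.

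If $c=0$, then $G$ itself is bridgeless and \cref{lem:k2-local} gives the bound $\lfloor d/r\rfloor$ directly. This is $0$ when $d<r$, and otherwise it is at most $\lfloor (d-1)/(r-1)\rfloor$ by the same inequality, which matches the $\max\{0,\cdot\}$ in the statement.

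I expect the main obstacle to be the bookkeeping of the $-1$ terms, both per piece and per connected component, so that pieces contributing zero, such as an $r$-regular bridgeless component, never push the estimate below $0$. It is also necessary to check that $D_r(B_i)=d_i+b_i$ holds when $B_i$ is a single vertex or contains loops. In both cases the identity is immediate from the definitions.
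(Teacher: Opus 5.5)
Your proof is correct and follows the paper's route: delete the bridges, apply \cref{lem:k2-local} to each bridgeless piece using $D_r(B_i)=d_i+d_T(i)$, and sum the local bounds using the forest structure of the bridge decomposition. The difference is only in the bookkeeping. You apply $\lfloor x/r\rfloor\le(x-1)/(r-1)$ to all $c+1$ pieces of a connected graph, which needs every $x_i\ge1$ and hence your reduction to connected $G$ and the separate case $c=0$. The paper instead sums only over $I=\{i:t_i>0\}$ and uses $\sum_{i\in I}d_T(i)\le c+e(T[I])\le c+|I|-1\le c+t-1$, which handles disconnected $G$ and $c=0$ in a single pass.
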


\begin{proof}
Delete all bridges of $G$, obtaining components $B_1,\ldots,B_m$. Contracting each $B_i$ to a vertex $i$ gives a forest $T$ with $c$ edges. The degree deficiency of $B_i$ is
\[
D_r(B_i)=\sum_{v\in V(B_i)}(r-d_G(v))+d_T(i),
\]
since each bridge incident with $B_i$ contributes one additional unit of deficiency after deletion.

Put $t_i=\lfloor D_r(B_i)/r\rfloor$ and $t=\sum_i t_i$. By \cref{lem:k2-local}, the union of suitable $2$-regular subgraphs of the $B_i$ omits at most $t$ vertices. If $t=0$, the assertion follows. Otherwise, let $I=\{i:t_i>0\}$, so $1\le|I|\le t$.

In the sum $\sum_{i\in I}d_T(i)$, each edge of $T$ is counted at most once except for edges of the induced forest $T[I]$, which are counted twice. Hence
\[
\sum_{i\in I}d_T(i)
\le c+e(T[I])\le c+|I|-1 \le c+t-1.
\]
Since $rt_i\le D_r(B_i)$ and each term in the sum defining $d$ is nonnegative, we obtain
\[
\begin{aligned}
rt
&\le\sum_{i\in I}D_r(B_i)
\le d+\sum_{i\in I}d_T(i)
\le d+c+t-1.
\end{aligned}
\]
Thus $t\le(d+c-1)/(r-1)$, and taking the floor proves the assertion.
\end{proof}

In particular, an $r$-regular graph with at most $r-1$ bridges has a $2$-factor, recovering the theorem of Hanson, Loten and Toft~\cite{HLT1998}. For connected graphs, van den Heuvel and Toft~\cite[Theorem~1.9]{HT2026} replace the bridge bound by the weaker requirement that at most \(r-1\) components obtained by deleting the bridges are incident with exactly one bridge.

\begin{proof}[Proof of the upper bound in \cref{thm:k2}]
Let $G$ be a connected simple $r$-regular graph on $n$ vertices, where $r\ge3$ is odd, and let $c$ be its number of bridges. If $c=0$, then \cref{lem:k2-local} gives a $2$-factor. If $c\ge1$, the bridge-count bound of O and West~\cite[Theorem~3.4]{OW2010} gives
\[
c-1\le\frac{(r-1)(n-2(r+2))}{r^2-3}.
\]
Since $D_r(G)=0$, \cref{prop:k2-bridges} now yields
\[
|V(G)|-f_2(G)
\le\left\lfloor\frac{c-1}{r-1}\right\rfloor
\le\left\lfloor\frac{n-2(r+2)}{r^2-3}\right\rfloor
<\frac{n}{r^2-3}.
\]
Applying this bound separately to the connected components proves $\delta_2(r)\le1/(r^2-3)$.
\end{proof}

\subsection{The sharp construction}\label{subsec:sharp-construction}

Sharpness follows from the construction of O and West~\cite[Construction~2.1]{OW2010}, which we describe in our notation. Define a graph $L_r$ on $r+2$ vertices as follows. Start with $K_{r+2}$, choose three distinct vertices $v,a,b$, delete the two edges $va,vb$, and delete a perfect matching on the remaining $r-1$ vertices. Then
$d_{L_r}(v)=r-1$ and $d_{L_r}(x)=r$ for $x\ne v$.

The graph $L_r$ has a $2$-factor. Indeed, $L_r-v$ is $K_{r+1}$ with $(r-1)/2$ edges deleted. Since $K_{r+1}$ decomposes into $r$ perfect matchings, one of them avoids all the deleted edges. Let $M$ be such a matching. Removing $M$ from $L_r$ leaves an $(r-1)$-regular graph on all $r+2$ vertices; since $r-1$ is even, Petersen's theorem decomposes it into $2$-factors.

For each integer $t$, take a tree with exactly $t$ nonleaf vertices, each of degree $r$. Such a tree has $\ell=(r-2)t+2$ leaves. Delete every leaf and, in its place, attach a fresh copy of $L_r$ by joining the former neighbor of the leaf to the vertex corresponding to $v$ in that copy. The resulting graph $G_t$ is simple and $r$-regular, with
\[
|V(G_t)|=t+(r+2)((r-2)t+2)=(r^2-3)t+2(r+2).
\]
Every edge of $G_t$ outside the copies of $L_r$ is a bridge, since contracting each copy of $L_r$ to a single vertex recovers the original tree. A $2$-regular subgraph is a union of cycles and hence uses no bridge, so none of the $t$ tree vertices can be covered. Conversely, each copy of $L_r$ has a spanning $2$-factor, so all other vertices can be covered. Therefore
\[
1-\frac{f_2(G_t)}{|V(G_t)|}
=\frac{t}{(r^2-3)t+2(r+2)}
\longrightarrow\frac1{r^2-3}.
\]
This proves the reverse inequality and completes the proof of \cref{thm:k2}.

\begin{remark}
The same construction gives lower bounds for other values of $k$. First suppose that $k$ is even. If a $k$-regular subgraph $H$ contained a bridge of $G_t$, the degree sum in $H$ over the vertices on one side of the bridge would be twice the number of edges of $H$ on that side, plus one for the bridge. This is odd, whereas every degree in $H$ is even. Thus $H$ uses no bridge and omits all $t$ tree vertices. Letting $t\to\infty$ gives $\delta_k(r)\ge1/(r^2-3)$.

Now let $1\le k\le r$ be odd and take $t=1$. The graph $G_1$ consists of a vertex $z$ joined by one bridge to each of $r$ disjoint copies of $L_r$, and has $1+r(r+2)=(r+1)^2$ vertices.

Let $H$ be a $k$-regular subgraph of $G_1$. If $z\in V(H)$, exactly $k$ of the $r$ bridges incident with $z$ belong to $H$; if $z\notin V(H)$, none does. Hence at least $r-k$ copies of $L_r$ have their joining bridge absent from $H$.

Fix such a copy $B$. Its joining bridge is the only edge of $G_1$ between $B$ and the rest of the graph. If every vertex of $B$ were covered by $H$, all $k$ selected edges at each vertex of $B$ would therefore lie inside $B$. The degree sum of the restriction of $H$ to $B$ would be $k(r+2)$, which is odd, contradicting the handshaking lemma. So each of these $r-k$ disjoint copies has at least one uncovered vertex, and
\[
\delta_k(r)\ge1-\frac{f_k(G_1)}{|V(G_1)|}
\ge\frac{r-k}{(r+1)^2}.
\]
For every fixed odd $k$, this is $\Omega_k(r^{-1})$ as $r\to\infty$ through odd integers.
\end{remark}

\section{Concluding remarks}

The sharp results for $k=1$ and $k=2$ suggest the following conjectures.

\begin{conjecture}\label{conj:even}
For every fixed even integer $k\ge4$, we have $\delta_k(r)=\Theta_k(r^{-2})$ as $r\to\infty$ through odd integers.
\end{conjecture}

\begin{conjecture}\label{conj:odd}
For every fixed odd integer $k\ge3$, we have $\delta_k(r)=\Theta_k(r^{-1})$ as $r\to\infty$.
\end{conjecture}

Our general upper bound is $O(k/\sqrt r)$. Allowing the alternating paths to share internal vertices while remaining edge-disjoint improves this to $O(\sqrt{k/r})$. For fixed even $k$, a more involved flow argument gives $O_k(\log r/r)$. We omit these longer arguments, as neither reaches the conjectured rate. It would also be interesting to determine the optimal dependence on both $k$ and $r$ when $k$ grows with $r$.

\section*{Acknowledgements}

The proof was discovered using ChatGPT Astra (Ultra). We thank Noga Alon for helpful discussions.

\end{document}